\definecolor{ForestGreen}{rgb}{0.15,0.416,0.18}
\definecolor{EgyptBlue}{rgb}{0.063,0.2,0.65}
\newtheorem{theorem}{Theorem}[section]
\newtheorem{lemma}[theorem]{Lemma}
\newtheorem{prop}[theorem]{Proposition}
\theoremstyle{definition}
\theoremstyle{definition}
\newtheorem{remark}[theorem]{Remark}
\theoremstyle{definition}
\numberwithin{equation}{section}
\numberwithin{table}{section}
\numberwithin{figure}{section}
\title{Title of the paper on differential equations}
\author[1]{\textbf{First Author}}
\author[1, 2]{\textbf{Second Author}\footnote{Corresponding author. Email: secondauthor@firstuniversity.com}}
\author[1]{\textbf{Third Author}}
\author[2]{\textbf{Fourth Author}}
\author[2, 3]{\textbf{Fifth Author}}
\affil[1]{First University, 1 University Street, City Name, H--1234, Country Name}
\affil[2]{Second University, 2 University Square, City Name H--9876, Country Name}
\affil[3]{Institute of Mathematics, Third University, 3 University Road, City Name, H--9888, Country Name}
\newcommand\shorttitle{The short title not exceeding 65 characters}
\newcommand\authorsshort{F. Author, S. Author, T. Author, F. Author and F. Author}
\newcommand{\ejqtdelogo}{tink.png}
\newcommand{\doi}[1]{\url{https://doi.org/#1}}
\renewcommand{\maketitle}{\bgroup\setlength{\parindent}{0pt}

\begin{picture}(20,20)
     \put(-50,-5){\includegraphics[width=2.9truecm]{\ejqtdelogo}}
  \end{picture}

\vspace{1truecm}
\begin{center}{\vbox{\titlefont\@title}}\end{center}
\vspace{0.5truecm}
\begin{center}{\@author} \end{center}

\egroup
}
\renewcommand{\@fnsymbol}[1]{%
    \ifcase#1 \or {\,\Letter\!} \or\textasteriskcentered\or \textasteriskcentered\textasteriskcentered
    \else\@ctrerr\fi}
\newcommand*{\titlefont}{\fontsize{18}{21.6}\selectfont\textbf}
\renewcommand\@author{\ifx\AB@affillist\AB@empty\AB@author\else
      \ifnum\value{affil}>\value{Maxaffil}\def\rlap##1{##1}%
    \AB@authlist\\[\affilsep]\vbox{\AB@affillist}
    \else  \AB@authors\fi\fi}
\def\Year{20XX}
\def\IssueNumber{XX}
\def\ps@plain{
\def\@oddhead{\ifnum\thepage=1\hss\baselineskip8pt
\vtop to 0 pt{\vskip-0.4truecm\noindent\hbox{\hspace{1.3truecm}\Large
Electronic Journal of Qualitative Theory of Differential Equations\hss\linebreak}%
\vskip 0.1truecm
\noindent\hbox{\hspace{1.3truecm}\footnotesize \Year, No.\ {\bf \IssueNumber}, {1--\begin{NoHyper}\pageref{LastPage}\end{NoHyper}};\enspace \href{https://doi.org/10.14232/ejqtde.\Year.1.\IssueNumber}{https:/\!/doi.org/10.14232/ejqtde.\Year.1.\IssueNumber}} \hfill \hbox{\footnotesize \href{https://www.math.u-szeged.hu/ejqtde/}{www.math.u-szeged.hu/ejqtde/}}
\vss}
\else
\hss\textit{\shorttitle}\hss\hbox to 0pt{\hss\thepage}\fi}\def\@oddfoot{}
\def\@evenhead{\hbox to 0pt{\thepage\hss} \hss\textit{\authorsshort}\hss}
\def\@evenfoot{}
}
\begin{document}

\thispagestyle{empty}

\noindent{\large\bf Estimates of complex eigenvalues and an inverse spectral problem for the transmission eigenvalue problem}\footnote{This paper has been published in Electronic Journal of Qualitative Theory of Differential Equations, 2019, No. 38, 1-15; https://doi.org/10.14232/ejqtde.2019.1.38}
\\

\noindent {\bf  Xiao-Chuan Xu$^{1}$} {\bf, Chuan-Fu Yang$^2$}
{\bf, Sergey A. Buterin$^3$}
{\bf and Vjacheslav A. Yurko$^3$}

\noindent \emph{\small{$^1$School of Mathematics and Statistics, Nanjing University of Information Science and Technology, Nanjing, 210044, Jiangsu,
People's Republic of China} }

\noindent \emph{\small{$^2$Department of Applied
Mathematics, School of Science, Nanjing University of Science and Technology, Nanjing, 210094, Jiangsu,
People's Republic of China} }

\noindent \emph{\small{$^3$Department of Mathematics, Saratov state University, Astrakhanskaya 83, Saratov 410012, Russia} }
\\

\noindent\small{Email: xcxu@nuist.edu.cn, chuanfuyang@njust.edu.cn, buterinsa@info.sgu.ru, \\
 yurkova@info.sgu.ru}
\\

\noindent{\bf Abstract.}
{This work deals with the interior transmission eigenvalue problem: $y'' + {k^2}\eta \left( r \right)y = 0$ with boundary conditions ${y\left( 0 \right) = 0 = y'\left( 1 \right)\frac{{\sin k}}{k} - y\left( 1 \right)\cos k},$ where the function $\eta(r)$ is positive. We obtain the asymptotic distribution of non-real transmission eigenvalues under the suitable assumption for the square of the index of refraction $\eta(r)$. Moreover, we provide a uniqueness theorem for the case $\int_0^1\sqrt{\eta(r)}dr>1$, by using all transmission eigenvalues (including their multiplicities) along with a partial information of $\eta(r)$ on the subinterval. The relationship between the proportion of the needed transmission eigenvalues and the length of the subinterval on the given $\eta(r)$ is also obtained.

\medskip
\noindent {\it Keywords:} Transmission eigenvalue problem, Scattering theory, Complex eigenvalue, Inverse spectral problem

\medskip
\noindent {\it 2010 Mathematics Subject Classification:} 35P25; 34L15; 34A55

\section{Introduction and main results}

Consider the  interior transmission problem
\begin{eqnarray}\label{1}
{y'' + {k^2}\eta \left( r \right)y = 0,\quad 0 < r < 1},\quad {y\left( 0 \right) = 0 = y'\left( 1 \right)\frac{{\sin k}}{k} - y\left( 1 \right)\cos k},
\end{eqnarray}
where the square of the index of refraction $\eta(r)$ is a positive function in $ W_2^2[0,1]$ with the natural assumption $\eta(1)=1$ and $\eta'(1)=0$. The $k^2$-values for which the problem (\ref{1}) has a nontrivial solution $y\left(r\right)$ are called \emph{transmission eigenvalues}. The problem (\ref{1}) appears in the inverse scattering theory for a spherically stratified medium, which consists in determining the function $\eta(r)$ from transmission eigenvalues. To study the inverse spectral problem, one has to investigate the property of transmission eigenvalues, such as, the existence of real or non-real eigenvalues and their asymptotic distribution.

We introduce two key quantities. Denote
\begin{equation}\label{i1}
a := \int_0^1 {\sqrt {\eta \left( r  \right)} } dr,
\end{equation}
 which is explained physically as the time needed for the wave to travel from $r = 0$ to $r = 1$. Introduce the \emph{characteristic function}
\begin{equation}\label{c9}
d\left( k \right): = y'\left( 1,k \right)\frac{{\sin k}}{k} - y\left( 1,k \right)\cos k,
\end{equation}
where $y\left( {r,k} \right)$ is the solution of $y'' + {k^2}\eta \left( r \right)y = 0$ with the initial conditions $y\left( {0,k} \right) = 0$ and $y'\left( {0,k} \right) = 1$.
Obviously, the transmission eigenvalues coincide with the squares of zeros of $d\left( k \right)$.

For the asymptotic  behavior of the transmission eigenvalues, McLaughlin and Polyakov \cite{JR} first showed that if $a\neq1$ then there are infinitely many real eigenvalues $\{(k_n')^2\}_{n\ge n_0}$, which have the asymptotics
 \begin{equation}\label{zx}
   (k_n')^2=\frac{n^2\pi^2}{(a-1)^2}+\frac{1}{a-1}\int_0^aq(x)dx+\kappa_n,\quad \{\kappa_n\}\in l^2\quad n\to\infty,
 \end{equation}
 where $q(x)$ is defined in (\ref{2}). Some aspects of the asymptotics of large (real and non-real) transmission eigenvalues for the case $a=1$ were discussed in \cite{XC}.

 In 2015, Colton and co-authors \cite{DC2} studied the existence and distribution of the non-real transmission eigenvalues. They showed that if $a\neq 1$ and $\eta''(1)\ne0$ (this assumption can be weakened \cite{DC3}), then there exists infinitely many real and non-real transmission eigenvalues, moreover, the imaginary parts of the non-real eigenvalues go to infinity. In particular, they give an example to show the distribution of the transmission eigenvalues, which is
 \begin{equation*}
  \eta(r)=\frac{16}{(r+1)^2(r-3)^2}.
\end{equation*}
It is easy to calculate  $\eta(1)=1,\eta'(1)=0$ and $\eta''(1)=1\ne0$. For this $\eta(r)$, the distribution of the zeros of $d(k)$ in the right half plane is shown numerically in the Figure 1.1 (see \cite{DC2}).

\begin{center}
\begin{figure}[!hbt]
 \includegraphics[scale=0.45,angle=0]{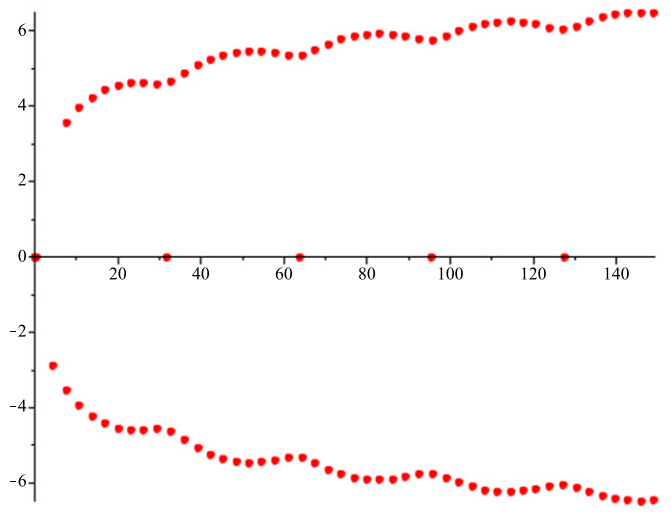}
  \caption{An example}
\end{figure}
\end{center}

From Figure 1, we see that the locations of the non-real zeros $\{x_n+iy_n\}$ of $d(k)$ in the right half-plane seem to satisfy asymptotically a logarithmic curve
$ y_n=\log (c x_n),$
where $c$ may be some complex number.
We will prove in theory that this is indeed true in the more general case (see Theorem \ref{t1}).

For the inverse spectral problem, many scholars contribute a lot of works (see \cite{TA2,TA1,BB,SA,SA1,CLH,DC1,JR,WS,WX,XC1,XC,XCa,CF1} and the references therein). Specifically, Aktosun and co-authors \cite{TA2,TA1} proved the uniqueness theorems and provided reconstruction algorithms for the cases $a<1$ and $a=1$. In the case $a=1$, to
determine the index of refraction uniquely, one has to know all the transmission eigenvalues  (including their
multiplicities) and
either a certain constant \cite{TA2,SA} or some knowledge of the $\eta(r)$ at $r=1$ \cite{WX,XC1}.
For the case $a>1$, however, there are only a few results. It is known \cite{DC1,JR} that the determination of $\eta(r)$ on $[0,1]$ with $\eta(1)=1$ and $\eta'(1)=0$ is equivalent to the determination of $q(x)$ on $[0,a]$ defined in (\ref{2}).
McLaughlin and Polyakov \cite{JR} first showed that if $a>1$ and $\eta(r)$ is known a priori on a subinterval $[\varepsilon_1,1]$ with $\varepsilon_1$ satisfying
\begin{equation}\label{xjaz}
\int_{\varepsilon_1}^1\sqrt{\eta(r)}dr=\frac{a+1}{2},
\end{equation}
then
$\eta(r)$ on $[0,\varepsilon_1]$  is uniquely determined by the transmission eigenvalues $\{(k_n')^2\}_{n\ge1}$ satisfying (\ref{zx}), where $\{(k_n')^2\}_{n=1}^{n_0-1}$ may be non-real.
In 2013, Wei and Xu \cite{WX} suggested to specify all transmission eigenvalues (including their multiplicities) and the norming constants, corresponding to the real eigenvalues, to obtain the unique determination of $\eta(r)$ on $[0,1]$.

In this paper, we will prove a new uniqueness theorem for the inverse spectral problem in the case $a>1$ (see Theorem \ref{t3}), by using the less known information on $\eta(r)$ and all eigenvalues (including real and non-real). Moreover, with the help of some ideas in \cite{RG,FB,MH,GR}, we give a relationship between the proportion of the needed eigenvalues and the length of the subinterval on the given $\eta(r)$ (see Theorem \ref{t4}).

The main results in this article are as follows.
\begin{theorem}\label{t1}
Assume that $\eta\in W_2^{m+3}[0,1]$ for some $m\in \mathbb{N}_0:=\{0\}\cup \mathbb{N}$. If $\eta(1)=1$, $\eta^{(u)}(1)=0$ for $u=\overline{1,m+1}$ and $\eta^{(m+2)}(1)\ne0$, then the characteristic function $d(k)$  has the non-real zeros $\{k_n^\pm\}$ satisfying the following asymptotic behavior, when $|n|\to\infty$, $n\in \mathbb{Z}$,

(i) $a\ne1$
\begin{equation*}
\begin{split}
   k_n^\pm=&\pm n\pi \pm \frac{i}{2}\log\left(\frac{4(2n\pi i)^{m+2}}{\eta^{(m+2)}(1)}\right)+\alpha_n^\pm,\;\;\alpha_n^\pm\in l^2\quad\text{for}\quad a>1,\\
     k_n^\pm=& \pm\frac{n\pi}{a} \pm \frac{i}{2a}\log\left(\frac{-4(2n\pi i)^{m+2}}{\eta^{(m+2)}(1)}\right)+\beta_n^\pm, \;\;\beta_n^\pm\in l^2\quad\text{for}\quad a<1.\\
\end{split}
\end{equation*}

(ii) $a=1$ and $\int_0^1q(x)dx\ne0$
\begin{equation*}
 k_n^\pm=\pm n\pi \pm\frac{i}{2}\log \left(\frac{-8(2n\pi i)^{m+1}\int_0^1q(s)ds}{\eta^{(m+2)}(1)}\right) +\gamma_n^\pm,\;\;\gamma_n^\pm\in l^2.
\end{equation*}
\end{theorem}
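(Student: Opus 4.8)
The plan is to pass to the Liouville normal form and then read off the non-real zeros of $d(k)$ from its high-energy asymptotics. First I would introduce the transformation $x=\int_0^r\sqrt{\eta(t)}\,dt$ (so $x$ runs over $[0,a]$ with $a$ as in (\ref{i1})) and $z(x)=\eta(r)^{1/4}y(r)$, which turns $y''+k^2\eta y=0$ into $z''+(k^2-q)z=0$ on $[0,a]$ with $q$ the potential of (\ref{2}). Since $\eta(1)=1$ and $\eta'(1)=0$, the endpoint relations collapse to $z(a)=y(1)$, $z'(a)=y'(1)$, while $z(0)=0$, $z'(0)=\eta(0)^{-1/4}$; hence, with $\varphi$ the solution of the transformed equation normalized by $\varphi(0)=0,\varphi'(0)=1$, the characteristic function (\ref{c9}) becomes $d(k)=\eta(0)^{-1/4}\big(\varphi'(a,k)\tfrac{\sin k}{k}-\varphi(a,k)\cos k\big)$. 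The last preliminary step is to translate the hypotheses on $\eta$ at $r=1$ into the potential: from $\eta(r)=1+\tfrac{\eta^{(m+2)}(1)}{(m+2)!}(r-1)^{m+2}+\cdots$ and $x-a\sim r-1$ one gets $q(a)=q'(a)=\cdots=q^{(m-1)}(a)=0$ and $q^{(m)}(a)=\tfrac14\eta^{(m+2)}(1)\ne0$.

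Next I would insert the transmutation representation $\varphi(x,k)=\tfrac{\sin kx}{k}+\int_0^x K(x,t)\tfrac{\sin kt}{k}\,dt$, together with the Goursat identities for $K$ (e.g. $K(x,x)=\tfrac12\int_0^x q$), and expand $d(k)$ in the exponentials $e^{\pm ik}$, $e^{\pm ika}$. At leading order the $e^{\pm ik(1+a)}$ contributions cancel and one is left with $d(k)\sim\eta(0)^{-1/4}\tfrac{\sin k(1-a)}{k}$, whose real zeros reproduce the real eigenvalues (\ref{zx}). The non-real zeros, however, are governed by the term of fastest growth as $\operatorname{Im} k\to+\infty$, namely the coefficient of $e^{-ik(1+a)}$. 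The decisive point is that this coefficient vanishes to every order $k^{0},\dots,k^{-(m+1)}$, precisely because $q^{(j)}(a)=0$ for $j<m$, and its first nonvanishing part is a fixed constant multiple of $q^{(m)}(a)\,k^{-(m+2)}=\tfrac14\eta^{(m+2)}(1)\,k^{-(m+2)}$; this is obtained by integrating the transmutation integral by parts $m+2$ times and using the diagonal values $\partial_t^{\,j}K(a,t)\big|_{t=a}$ expressed through $q,q',\dots$ at $a$. The outcome is an asymptotic identity, in each half-plane, of the form (surviving leading term) $+\,C\,\eta^{(m+2)}(1)k^{-(m+2)}e^{-ik(1+a)}+(l^2\text{-remainder})=0$.

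It then remains to solve this balance relation in the three cases. For $a>1$ and $\operatorname{Im} k>0$ the surviving term is $\tfrac1{2i}e^{ik(1-a)}$; dividing through and taking logarithms yields $e^{2ik}=-C'\eta^{(m+2)}(1)^{-1}(2ik)^{m+2}(1+o(1))$, whence $\operatorname{Re} k\to n\pi$ and $\operatorname{Im} k\sim\tfrac{m+2}{2}\log n$, giving $k_n^+$; the conjugate sequence $k_n^-$ comes from the symmetry $d(\bar k)=\overline{d(k)}$, which is exactly what the factor $(\pm1)^m$ records. For $a<1$ the surviving term is instead $-\tfrac1{2i}e^{-ik(1-a)}$, the balance becomes $e^{2ika}=\cdots$, producing $\operatorname{Re} k\to n\pi/a$, the extra factor $1/a$ in the imaginary part, and the sign change displayed in the statement. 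For $a=1$ the leading $\sin k(1-a)$ vanishes identically, so the first surviving term is the next one, $\tfrac1{2k}\int_0^1 q$, which now balances $e^{-2ik}$ and lowers the power to $(2n\pi i)^{m+1}$; this is precisely where the hypothesis $\int_0^1 q\ne0$ is needed.

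The $l^2$ assertions for $\alpha_n^\pm,\beta_n^\pm,\gamma_n^\pm$ follow because $\eta\in W_2^{m+3}$ forces $q\in W_2^{m+1}$, so the Fourier-type remainders in the expansion of $\varphi,\varphi'$, evaluated along $k\approx n\pi$ (respectively $n\pi/a$), are square-summable; a Rouch\'e / argument-principle argument then upgrades the approximate roots to genuine zeros with $l^2$ correction. I expect the main obstacle to be the bookkeeping of the previous paragraph: showing that \emph{all} contributions to the coefficient of $e^{-ik(1+a)}$ of order higher than $k^{-(m+2)}$ cancel, and isolating the clean constant multiple of $q^{(m)}(a)$ with the exact numerical factors (the $4$, the sign, and the $(2n\pi i)^{m+2}$). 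This requires the Goursat relations for the diagonal derivatives of $K$ and a careful separation of the two endpoint contributions, and it is the delicate computation on which the precise constants in the theorem rest.
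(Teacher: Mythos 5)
Your proposal follows essentially the same route as the paper's proof: the Liouville transformation giving $q^{(u)}(a)=0$ for $u<m$ and $q^{(m)}(a)=\tfrac14\eta^{(m+2)}(1)$, the transmutation-kernel representation of $\varphi$, repeated integration by parts to show that the coefficient of the dominant exponential first survives at order $\eta^{(m+2)}(1)k^{-(m+2)}$ (respectively, that $\int_0^1 q$ takes over when $a=1$), and finally the solution of the resulting logarithmic balance with $l^2$ remainders. The ``delicate bookkeeping'' you flag is exactly what the paper does via the identity $K_1(t)+K_2(t)=\tfrac12 q\bigl(\tfrac{a+t}{2}\bigr)+\int_{(a+t)/2}^{a}q(\tau)K(\tau,a+t-\tau)\,d\tau$ (yielding $K^{(m)}(a)=q^{(m)}(a)/2^{m+1}$), and the paper closes the argument with the known asymptotic solution of the transcendental equation $z-\lambda\log z=w$ instead of your Rouch\'e step --- a cosmetic, not substantive, difference.
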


\begin{theorem}\label{t3}
Under the assumptions in Theorem \ref{t1}, if $a>1$ and $\eta(r)$ is known a priori on $[\varepsilon,1]$ with $\varepsilon$ satisfying
\begin{equation}\label{t31}
\int_\varepsilon^1\sqrt{\eta(r)}dr=\frac{a-1}{2},
\end{equation}
then $\eta(r)$ on $[0,1]$ is uniquely determined by all zeros of $d(k)$ (including multiplicity).
\end{theorem}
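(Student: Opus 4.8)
The plan is to run a comparison argument. Suppose $\eta$ and $\tilde\eta$ both satisfy the hypotheses of Theorem~\ref{t1}, coincide on $[\varepsilon,1]$, and produce the same zeros of the characteristic function (with multiplicities); I must show $\eta\equiv\tilde\eta$ on $[0,\varepsilon]$. Write $\varphi(r,k)=y(r,k)$ for the left solution ($\varphi(0)=0$, $\varphi'(0)=1$) and let $\psi(r,k)$ be the right solution fixed by $\psi(1,k)=\tfrac{\sin k}{k}$, $\psi'(1,k)=\cos k$, so that $d(k)=W[\varphi,\psi](r):=\varphi'(r,k)\psi(r,k)-\varphi(r,k)\psi'(r,k)$ is independent of $r$ and equals \eqref{c9}. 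Since $d(k)$ is even in $k$ and entire of order one and exponential type $a+1$ (consistent with the zero asymptotics of Theorem~\ref{t1}, whose real and non-real families have counting density $\tfrac{a-1}{\pi}$ and $\tfrac{2}{\pi}$), the two functions $d,\tilde d$ share type and zero set; hence $a=\tilde a$, and by Hadamard's factorization together with the common leading asymptotics $d(k)\sim-\sin((a-1)k)/k$ one gets $d\equiv\tilde d$. In particular \eqref{t31} yields $\int_0^\varepsilon\sqrt{\eta}=\int_0^\varepsilon\sqrt{\tilde\eta}=\tfrac{a+1}{2}$.

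Next I set up the basic identity. Because $\eta=\tilde\eta$ on $[\varepsilon,1]$ and $\psi$ is determined by data at $r=1$, we have $\psi\equiv\tilde\psi$ there; evaluating the constant Wronskians at $r=\varepsilon$ and subtracting $d=\tilde d$ gives
\begin{equation}\label{pp1}
(\varphi'(\varepsilon,k)-\tilde\varphi'(\varepsilon,k))\psi(\varepsilon,k)-(\varphi(\varepsilon,k)-\tilde\varphi(\varepsilon,k))\psi'(\varepsilon,k)=0.
\end{equation}
Multiplying $\varphi''+k^2\eta\varphi=0$ by $\tilde\varphi$, the tilded equation by $\varphi$, subtracting and integrating over $[0,\varepsilon]$ (the boundary term at $0$ vanishing since $\varphi,\tilde\varphi$ share the initial data) yields, with $\Phi(k):=\int_0^\varepsilon(\eta-\tilde\eta)\varphi(r,k)\tilde\varphi(r,k)\,dr$,
\begin{equation}\label{pp2}
W[\varphi,\tilde\varphi](\varepsilon)=-k^2\Phi(k).
\end{equation}
Combining \eqref{pp1} with the expansion of $W[\varphi,\tilde\varphi](\varepsilon)$ and using $\tilde\varphi'(\varepsilon)\psi(\varepsilon)-\tilde\varphi(\varepsilon)\psi'(\varepsilon)=\tilde d=d$ produces the key relation
\begin{equation}\label{pp3}
k^2\Phi(k)\,\psi(\varepsilon,k)=(\varphi(\varepsilon,k)-\tilde\varphi(\varepsilon,k))\,d(k).
\end{equation}

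The crux is a growth estimate. The right solution $\psi(\varepsilon,k)$ is entire of exponential type $1+\int_\varepsilon^1\sqrt{\eta}=\tfrac{a+1}{2}$, while $\varphi(\varepsilon,k)$ and $\tilde\varphi(\varepsilon,k)$ have type $\int_0^\varepsilon\sqrt{\eta}=\tfrac{a+1}{2}$; both facts use precisely \eqref{t31}. Reading \eqref{pp3} along the imaginary axis then forces $\Phi$ to have exponential type at most $\tfrac{a+1}{2}+(a+1)-\tfrac{a+1}{2}=a+1$, i.e. type$(\Phi)\le{}$type$(d)$. (For a known part of optical length $\ell=\int_\varepsilon^1\sqrt\eta$ the same computation gives type$(\Phi)\le 2(a-\ell)$, which is $\le a+1$ exactly when $\ell\ge\tfrac{a-1}{2}$; this threshold is \eqref{t31} and is what underlies Theorem~\ref{t4}.) Since \eqref{pp3} shows $\Phi$ vanishes at every zero of $d$ (the finitely many zeros at which $\psi(\varepsilon,\cdot)$ or $k$ also vanish being absorbed into a rational factor), the even entire function $\Phi/d$ has exponential type zero; as $\Phi=O(k^{-2})$ on $\mathbb{R}$ while $|d|$ decays only like $k^{-1}$ between its zeros, and $\Phi/d=O(k^{-2})$ on $i\mathbb{R}$ by \eqref{pp3}, Phragmén--Lindelöf and Liouville give $\Phi/d\equiv0$, hence $\Phi\equiv0$.

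It remains to pass from $\Phi\equiv0$, that is $\int_0^\varepsilon(\eta-\tilde\eta)\varphi\tilde\varphi\,dr\equiv0$ for all $k$, to $\eta\equiv\tilde\eta$ on $[0,\varepsilon]$. Substituting the transformation-operator representations of $\varphi,\tilde\varphi$ obtained after the Liouville change of variable $x=\int_0^r\sqrt{\eta}$ (respectively $\tilde x$), this identity becomes the vanishing of a Fourier-cosine transform of an $L^1$ function supported on $[0,\tfrac{a+1}{2}]$ whose leading part is a nonzero multiple of $\eta-\tilde\eta$; injectivity of this transform followed by a Volterra (Gronwall) argument peeling off the kernel contributions gives $\eta\equiv\tilde\eta$ on $[0,\varepsilon]$, and with the prescribed part $\eta\equiv\tilde\eta$ on $[0,1]$. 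The main obstacle is the type bookkeeping of the previous paragraph---establishing type$(\Phi)\le{}$type$(d)$, which is exactly what \eqref{t31} buys---while the residual technical difficulty is this completeness endgame, complicated by the fact that the two Liouville variables $x$ and $\tilde x$ differ on $[0,\varepsilon]$.
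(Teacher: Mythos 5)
Your overall strategy is the same as the paper's: express the difference functional via a Wronskian/Green identity, divide by the characteristic function to get an entire function, kill that function by growth estimates plus Liouville, and finish with a completeness step. But there are genuine gaps in the execution. First, your claim $d\equiv\tilde d$ is unjustified: from equal zero sets (with multiplicities), evenness, and Hadamard factorization you only get $d=C\tilde d$ for some constant $C$, and the true leading asymptotics is $d(k)\sim \sin(k(1-a))/(k\,\eta(0)^{1/4})$ --- your stated asymptotics drops the factor $\eta(0)^{-1/4}$, and $\eta(0)=\tilde\eta(0)$ is not known a priori. The paper is careful on exactly this point: it keeps the Hadamard constants $\gamma,\tilde\gamma$ in (\ref{uy}) and uses only that the product $\gamma\eta(0)^{1/4}$ is determined by $E(k)$. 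This gap is repairable (run your (pp1)--(pp3) with $\varphi-C\tilde\varphi$ in place of $\varphi-\tilde\varphi$), but as written your key relation (pp3) rests on a false identity.

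Second, and more seriously, you never establish that $\Phi/d$ is bounded on the real axis, which is what any Phragm\'en--Lindel\"of/Liouville argument requires. Knowing $\Phi=O(k^{-2})$ on $\mathbb{R}$ while $|d|\gtrsim |k|^{-1}$ ``between its zeros'' says nothing about the quotient near the real zeros of $d$ (equivalently, near the real zeros of $k^2\psi(\varepsilon,\cdot)$ in your representation), and that is precisely where the difficulty sits. This is the step the paper labors over as $(\ast)$: it derives the explicit representation (\ref{yr}), $G(k)=\tilde\gamma\,\eta(0)^{-1/4}(\sin k)^{-1}\int_0^a\bigl(K(a,t)-\tilde K(a,t)\bigr)\sin(kt)\,dt$, observes that entirety of $G$ forces $\int_0^a\bigl(K(a,t)-\tilde K(a,t)\bigr)\sin(n\pi t)\,dt=0$ for all $n$, and only then gets boundedness on $\mathbb{R}$ (a mean-value argument near each $n\pi$) together with $G(n\pi)\to0$, which is what turns the Liouville constant into zero; the lower bound (\ref{eqq}) on the imaginary axis --- the place where the hypotheses of Theorem \ref{t1} actually enter Theorem \ref{t3} --- then feeds Lemma \ref{l3}. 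You would need an analogue of all of this for $(\varphi-C\tilde\varphi)(\varepsilon,k)/(k^2\psi(\varepsilon,k))$, and you do not supply it. Finally, your completeness endgame ($\Phi\equiv0\Rightarrow\eta=\tilde\eta$ on $[0,\varepsilon]$) is only a sketch and is genuinely awkward in the $r$-variable, because, as you concede, the Liouville variables $x$ and $\tilde x$ differ on $[0,\varepsilon]$; the paper sidesteps this entirely by setting up the comparison in the $x$-variable from the start (so that $\varphi,\tilde\varphi$ live on the common interval $[0,a]$, using $a=\tilde a$), where $g\equiv0\Rightarrow q=\tilde q$ is an immediate application of Ramm's Property C, Lemma \ref{l0}.
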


\begin{remark}
Eqs.(\ref{xjaz}) and (\ref{t31}) lead to $\int_{\varepsilon_1}^\varepsilon\sqrt{\eta(r)}dr=1$, which implies $\varepsilon>\varepsilon_1$.
\end{remark}

Let $N(r)$ be the number of non-real zeros $\{k_j\}_{j\ge1}$ of the function $d(k)$ in the disk $|k|\le r$, namely, $N(r):={\rm{\# }}\{j:|k_j|\le r\}$. From \cite{DC2,DC3} we see that if $a\ne1$ and $\eta(r)$ is non-constant near $r=1$ then the density of all zeros of $d(k)$  on the right half plane is ${(1+a)}/{\pi}$, and the density of the real zeros on the right half plane is ${|1-a|}/{\pi}$ if $a\ne1$. Note that $d(k)$ is an even function of $k$. It follows that if $a>1$ and  $\eta(r)$ is non-constant near $r=1$ then
\begin{equation}\label{xqa}
 N(r)=\frac{4r}{\pi}[1+o(1)],\quad r\to+\infty.
\end{equation}
Let $D$ be a subset of $\{k_j\}_{j\ge1}$, and denote  $N_D(r):={\rm{\# }}\{j:k_j\in D,|k_j|\le r\}$.

\begin{theorem}\label{t4}
Assume that $\eta\in C^2[0,1]$ with $\eta(1)=1$ and $\eta'(1)=0$, and $\eta(r)$ is non-constant near $r=1$. If $a>1$ and $\eta(r)$ is known a prior on $[\varepsilon_2,1]$ with $\varepsilon_2$
 satisfying
\begin{equation}\label{t40}
\int_{\varepsilon_2}^1\sqrt{\eta(r)}dr=b,\quad b>\frac{a-1}{2}
\end{equation}
then set $\{k_n'\}_{n\ge n_0}$ satisfying (\ref{zx}) and the subset $D$ satisfying $N_D(r)=\frac{2\alpha r}{\pi}[1+o(1)]$ as $r\to+\infty$ with $\alpha>a+1-2b$ uniquely determine $\eta(r)$ on $[0,1]$.
\end{theorem}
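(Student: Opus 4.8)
The plan is to reduce the statement to a Hochstadt--Lieberman type argument after the Liouville transformation, converting the density hypothesis on the prescribed zeros into a statement about the exponential type of an auxiliary entire function. First I would apply the Liouville transformation $x=\int_0^r\sqrt{\eta(s)}\,ds$, which maps $[0,1]$ onto $[0,a]$ and turns (\ref{1}) into a Schr\"odinger equation $-\varphi''+q(x)\varphi=k^2\varphi$ with the potential $q$ of (\ref{2}); by the cited equivalence \cite{DC1,JR}, recovering $\eta$ on $[0,1]$ is the same as recovering $q$ on $[0,a]$. Under this map the a priori known piece $\eta|_{[\varepsilon_2,1]}$ becomes $q|_{[a-b,a]}$, since $\int_{\varepsilon_2}^1\sqrt\eta=b$, so only $q$ on the subinterval $[0,a-b]$ is unknown. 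I would then argue by contradiction: suppose $\eta,\tilde\eta$ both satisfy the hypotheses, agree on $[\varepsilon_2,1]$, and share the prescribed data (the real eigenvalues $\{k_n'\}_{n\ge n_0}$ and the subset $D$), and aim to prove $q\equiv\tilde q$ on $[0,a-b]$.

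The core object is the bilinear functional
\[
\omega(k):=\int_0^{a-b}\bigl(q(x)-\tilde q(x)\bigr)\,\varphi(x,k)\,\tilde\varphi(x,k)\,dx,
\]
where $\varphi,\tilde\varphi$ solve the two transformed equations with the common initial data at $x=0$. A Green's identity converts $\omega(k)$ into a Wronskian boundary term at $x=a-b$; evaluating that term by propagating the transmission boundary condition at $r=1$ backwards through the known interval $[a-b,a]$, where $q=\tilde q$, shows that $\omega$ is entire and that $\omega(k_j)=0$ at every prescribed zero $k_j$ counted with multiplicity, because there $y(\cdot,k_j)$ and $\tilde y(\cdot,k_j)$ obey the same boundary relation and the same equation on $[\varepsilon_2,1]$. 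The decisive feature is the growth: the integrand is supported on the unknown interval $[0,a-b]$ and $\varphi(x,k)\tilde\varphi(x,k)=\frac{\sin^2 kx}{k^2}\bigl(1+o(1)\bigr)$ by the transformation operators, so $\omega$ has exponential type exactly $2(a-b)$ and is $O(|k|^{-2})$ on $\mathbb{R}$; that is, $\omega\in\mathrm{PW}_{2(a-b)}$.

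Next comes the counting. By (\ref{zx}) the real zeros satisfy $k_n'\sim n\pi/(a-1)$, so together with their mirror images $-k_n'$ they have density $2(a-1)/\pi$, while the subset $D$ contributes density $2\alpha/\pi$ by the hypothesis $N_D(r)=\frac{2\alpha r}{\pi}[1+o(1)]$; hence the prescribed zeros of $\omega$ have density $\frac{2(a-1+\alpha)}{\pi}$. On the other hand a nonzero element of $\mathrm{PW}_{2(a-b)}$ can vanish on a set of density at most $\frac{2\cdot 2(a-b)}{\pi}=\frac{4(a-b)}{\pi}$. The hypothesis $\alpha>a+1-2b$ is precisely equivalent to
\[
\frac{2(a-1+\alpha)}{\pi}>\frac{4(a-b)}{\pi},
\]
so the prescribed zeros already overdetermine $\omega$, and by a Levinson/completeness lemma in the spirit of \cite{RG,FB,GR} the strict inequality forces $\omega\equiv0$. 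The strictness, rather than mere equality, is exactly what supplies the slack needed to absorb the $o(1)$ terms in the two density counts and to stay off the critical case.

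Finally, $\omega(k)\equiv0$ means $\int_0^{a-b}(q-\tilde q)\,\varphi\,\tilde\varphi\,dx=0$ for all $k$; since the products $\{\varphi(\cdot,k)\tilde\varphi(\cdot,k)\}$ are complete in $L^2[0,a-b]$, this yields $q=\tilde q$ a.e.\ on $[0,a-b]$, and combined with $q=\tilde q$ on $[a-b,a]$ gives $q\equiv\tilde q$ on $[0,a]$, hence $\eta\equiv\tilde\eta$ on $[0,1]$. I expect the main obstacle to be the second step: producing the correct bilinear functional that both vanishes on the prescribed zeros and has exponential type exactly $2(a-b)$, because the nonstandard transmission boundary condition $y'(1)\frac{\sin k}{k}-y(1)\cos k$ and the Liouville normalization must be threaded through Green's identity so that only the unknown interval contributes to the growth. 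The density lemma itself, though delicate at criticality, is a citable tool, and the strict inequality $\alpha>a+1-2b$ keeps the argument safely away from the critical threshold.
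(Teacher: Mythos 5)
Your proposal is correct and follows essentially the same route as the paper: the same bilinear functional $g(k)=\int_0^{a-b}(q-\tilde q)\varphi\tilde\varphi\,dx$ vanishing at the prescribed eigenvalues via the Wronskian/boundary-condition argument, the same type estimate $2(a-b)$, the same density comparison showing $\alpha>a+1-2b$ forces $g\equiv0$ (the paper phrases the density lemma as Levin's inequality relating the zero-counting function to the indicator integral, Lemma~\ref{3.1}, rather than a Paley--Wiener completeness statement, but these are the same tool), and the same conclusion by Ramm's Property~C (Lemma~\ref{l0}).
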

\begin{remark}
By virtue of (\ref{xqa}), we know that the value of $\alpha$ is at most $2$. Since $b>(a-1)/2$, we have $a+1-2b<2$. Thus the condition $\alpha>a+1-2b$ makes sense. Moreover, together with Theorems \ref{t3} and \ref{t4}, we see that if the known subinterval of $\eta(r)$ is a little bigger, then  infinitely many eigenvalues can be missing for the unique determination of $\eta(r)$.
\end{remark}
\section{Preliminaries}

In this section, we provide some known auxiliary results.

Using the Liouville transformation,
\begin{equation}\label{k0}
x {\rm{ = }}\int_0^r {\sqrt {\eta \left( \rho  \right)} d\rho },\quad  \varphi\left( x \right): = {\left( {\eta \left( r \right)} \right)^{\frac{1}{4}}}y\left( r \right),\quad r = r\left( x \right),
\end{equation}
we can write the equation $y'' + {k^2}\eta \left( r \right)y = 0$ with $y\left( {0,k} \right) = 0$ and $y'\left( {0,k} \right) = 1$ as
\begin{equation}\label{nj}
\varphi''(x) + \left( {{k^2} - q\left( x  \right)} \right)\varphi(x) = 0,\quad
\varphi\left( 0 \right) = 0,\quad \varphi'\left( 0 \right) = \eta {\left( 0 \right)^{ - \frac{1}{4}}},
\end{equation}
where
\begin{eqnarray}\label{2}
q\left( x\right) = \frac{\eta ''(r)}{4{{(\eta(r))}^2}} - \frac{5}{{16}}\frac{{{{\left(\eta'(r)\right)} }^2}}{{(\eta (r))}^3}.
\end{eqnarray}
Using the transformation operator theory (see, e.g. \cite{VM}), we have
\begin{equation}\label{gg}
 \eta(0)^{\frac{1}{4}} \varphi(x,k)=\frac{\sin (kx)}{k}+\int_0^xK(x,t)\frac{\sin(kt)}{k}dt,
\end{equation}
where $K(x,t)$ satisfies the following integral equation (see, e.g. \cite{SA})
\begin{equation}\label{k1}
\begin{split}
\! \!\!2K(x,t)=&\int_{\frac{x-t}{2}}^{\frac{x+t}{2}}q(\tau)d\tau+\int_{x-t}^xq(\tau)d\tau\int_{\tau+t-x}^\tau \!\!\!K(\tau,s)ds\\
 &+\int_{\frac{x-t}{2}}^{x-t}q(\tau)d\tau\int_{x-t-\tau}^\tau \!\!\!K(\tau,s)ds-\int_{\frac{x+t}{2}}^{x}q(\tau)d\tau\int_{x+t-\tau}^\tau\!\!\! K(\tau,s)ds,
\end{split}
\end{equation}
where $0\le t\le x\le a$. In particular, $2K(x,x)=\int_0^xq(s)ds$ and $K(x,0)=0$. On the other hand, from Eq.(1.2.9) in \cite{VM}, we know that
\begin{equation}\label{lkja}
  K(x,t)=K_0(x,t)-K_0(x,-t),
\end{equation}
where $K_0(x,t)$ with $0\le|t|\le x\le a$ satisfies that if $q\in C^m[0,a]$ then $K_0(x,\cdot)\in C^{m+1}[-x,x]$ for each fixed $x\in[0,a]$ (see Theorem 1.2.2 in \cite{VM}). It follows from (\ref{lkja}) that if $q \in C^m[0,a]$  then
\begin{equation}\label{kk}
\left. {\frac{ \partial^{2n} K(x,t)}{\partial t^{2n}}} \right|_{t=0}=0  \quad n=\overline{0,[(m+1)/2]},
\end{equation}
where $[(m+1)/2]$ denotes the entire part of $(m+1)/2$.

By virtue of (\ref{k0}) and $\eta(1)=1$ and $\eta'(1)=0$, we have $\varphi(a,k)=y(1,k)$ and $\varphi'(a,k)=y'(1,k)$. Thus,
\begin{eqnarray}\label{3}
y\left( 1,k \right) \!= \!\frac{1}{{{{{\eta \left( 0 \right)} }^{\frac{1}{4}}}}}\left[ {\frac{{\sin \left(ka\right)}}{k} \!-\!\frac{\cos(ka)}{2k^2}\int_0^a\!q(s)ds+ \int_0^{a} \!{K_t\left( {a,t} \right)\frac{{\cos \left( kt\right)}}{k^2}dt} } \right],
\end{eqnarray}
and
\begin{eqnarray}\label{4}
y'\left( 1,k \right) \!= \!{{\frac{1}{{\eta \left( 0 \right)^{\frac{1}{4}}}}} }\!\left[\! {\cos \left(ka\right) + \!\frac{{\sin \left(ka\right)}}{{2k}}\int_0^a \! {q\left( s \right)ds +\! \int_0^a \!\!{{K_x }\left( {a,t} \right)\frac{{\sin \left(kt\right)}}{k}dt} } } \!\right].
 \end{eqnarray}
Denote $K_1(t):=K_x(a,t)$ and $K_2(t):=K_t(a,t)$. Using Eq.(\ref{k1}), by tedious calculation, we have
\begin{equation}\label{k2}
  \begin{split}
 \!\!\!\!K_1(t)=\frac{1}{4}&\left[q\left(\frac{a+t}{2}\right)\!-\!q\left(\frac{a-t}{2}\right)\right]+\frac{1}{2}\int_{a-t}^aq(\tau)K(\tau,\tau+t-a)d\tau\\
 &-\frac{1}{2}\int_{\frac{a-t}{2}}^{a-t}\!q(\tau)K(\tau,a-\!t-\tau)d\tau\!+\!\frac{1}{2}\int_{\frac{a+t}{2}}^{a}\!q(\tau)K(\tau,a+t-\!\tau)d\tau,
  \end{split}
\end{equation}
and
\begin{equation}\label{k3}
  \begin{split}
 \!\!\!\! K_2(t)\!=\frac{1}{4}&\left[q\left(\frac{a+t}{2}\right)\!+\!q\left(\frac{a-t}{2}\right)\right]\!-\!\frac{1}{2}\int_{a-t}^aq(\tau)K(\tau,\tau+t-a)d\tau\\
 &+\frac{1}{2}\int_{\frac{a-t}{2}}^{a-t}\!q(\tau)K(\tau,a-t\!-\tau)d\tau+\frac{1}{2}\int_{\frac{a+t}{2}}^{a}\!q(\tau)K(\tau,a+t-\!\tau)d\tau.
  \end{split}
\end{equation}

To get Theorem \ref{t1}, we introduce the following transcendental equation
\begin{equation}\label{k5}
  z-\lambda\log z=w,
\end{equation}
where $\lambda$ is a constant in $\mathbb{C}$ and $\log z=\log|z|+i\arg z$ with $-\pi<\arg z\le \pi$.
\begin{prop}\label{p2.1}
The transcendental equation (\ref{k5}) has a unique solution
\begin{equation}\label{k6}
  z(w)=w+\lambda\log w+O\left(\frac{\log |w|}{|w|}\right)
\end{equation}
for any sufficiently large $|w|$.
\end{prop}
Using a similar discussion in \cite[p.50]{MF} or \cite{ST}, one can prove  Proposition \ref{p2.1}. For convenience of reader, we give the proof in the Appendix. We will transform the equation $d(k)=0$ to the equation with the form of (\ref{k5}), and then use (\ref{k6}) to obtain the asymptotics of non-real transmission eigenvalues.

 For the inverse spectral problem, we shall use the following three lemmas.
 \begin{lemma}\emph{(See \cite[p.28]{PK})}\label{l3}
Let $G(k)$ be analytic in $\mathbb{C}_+$ and continuous in $\overline{\mathbb{C}}_+:=\mathbb{C}_+\cup\mathbb{R}$. Suppose that

(i) $\log|G(k)|=O (k)$ for $|k|\to\infty$ in $\mathbb{C}_+:=\{k\in\mathbb{C}:{\rm Im}k>0\}$,

(ii) $|G(x)|\le C$ for some constant $C>0$, $x\in \mathbb{R}$,

(iii)$\mathop {\varlimsup}\limits_{\tau\to+\infty}{\log|G({\rm i}\tau)|}/{\tau}=A$.\\
Then, for $k\in\overline{\mathbb{C}}_+$, there holds
\begin{equation*}
  |G(k)|\le Ce^{A{\rm Im}k}.
\end{equation*}
\end{lemma}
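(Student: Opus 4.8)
The plan is to prove the estimate by a Phragm\'en--Lindel\"of argument applied to a renormalized function, finishing with the maximum principle for bounded analytic functions on a half-plane. First I would absorb the target exponential rate into the function by setting $H(k):=G(k)e^{iAk}$. Since $|e^{iAk}|=e^{-A\,\mathrm{Im}\,k}$, the desired conclusion $|G(k)|\le Ce^{A\,\mathrm{Im}\,k}$ is exactly equivalent to the uniform bound $|H(k)|\le C$ on $\overline{\mathbb{C}}_+$, so it suffices to prove the latter.

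Next I would record the properties $H$ inherits. By (i), $\log|H(k)|=\log|G(k)|-A\,\mathrm{Im}\,k=O(|k|)$, so $H$ is of exponential type (order one) in $\mathbb{C}_+$. By (ii), $|H(x)|=|G(x)|\le C$ for all $x\in\mathbb{R}$. By (iii), for every $\delta>0$ one has $|G(i\tau)|\le e^{(A+\delta)\tau}$ for large $\tau$, hence $|H(i\tau)|=|G(i\tau)|e^{-A\tau}\le e^{\delta\tau}$; that is, $H$ grows subexponentially along the positive imaginary axis. The difficulty is that $H$ has order one, which is precisely the Phragm\'en--Lindel\"of threshold for a half-plane, so a single step on $\mathbb{C}_+$ cannot work --- the function $e^{-ik}$ is bounded on $\mathbb{R}$ yet grows like $e^{\mathrm{Im}\,k}$ in $\mathbb{C}_+$, showing that boundedness on $\mathbb{R}$ alone is insufficient and that condition (iii) must be brought in.

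To get around this I would introduce, for each fixed $\epsilon>0$, the regularization $H_\epsilon(k):=H(k)e^{i\epsilon k}=G(k)e^{i(A+\epsilon)k}$. On the real axis $|H_\epsilon|=|H|\le C$, while taking $\delta=\epsilon/2$ above gives $|H_\epsilon(i\tau)|\le e^{(\epsilon/2)\tau}e^{-\epsilon\tau}\to0$, so $H_\epsilon$ is bounded on both the positive real and positive imaginary rays. Splitting $\mathbb{C}_+$ into the two quadrants $\{0\le\arg k\le\pi/2\}$ and $\{\pi/2\le\arg k\le\pi\}$, each a sector of opening $\pi/2$ whose Phragm\'en--Lindel\"of growth threshold is order two, and using that $H_\epsilon$ is still of order one $<2$ together with the boundedness on the bounding rays (on the negative real axis again $|H_\epsilon(x)|=|G(x)|\le C$), I conclude that $H_\epsilon$ is bounded throughout $\overline{\mathbb{C}}_+$.

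The final step sharpens the constant. Once $H_\epsilon$ is known to be bounded and continuous up to $\mathbb{R}$, it has order zero, so Phragm\'en--Lindel\"of on the full half-plane (threshold order one) forces $|H_\epsilon(k)|\le\sup_{x\in\mathbb{R}}|H_\epsilon(x)|\le C$ for all $k\in\overline{\mathbb{C}}_+$. Unwinding the multiplier yields $|H(k)|\le Ce^{\epsilon\,\mathrm{Im}\,k}$ for every $\epsilon>0$, and letting $\epsilon\to0^+$ gives $|H(k)|\le C$, which is the claim. I expect the main obstacle to be exactly this constant: a naive one-shot argument would only produce $|G(k)|\le C_\epsilon e^{(A+\epsilon)\,\mathrm{Im}\,k}$ with an $\epsilon$-dependent constant blowing up as $\epsilon\to0$, so the two-stage structure --- first boundedness in the quadrants, then the half-plane maximum principle, and only then removing $\epsilon$ --- is what delivers the sharp bound with the same constant $C$.
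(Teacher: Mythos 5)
Your proof is correct, but there is no in-paper argument to compare it against: the paper states this lemma as a known result, citing Koosis, \emph{The Logarithmic Integral I}, p.~28, and gives no proof. Your argument is in fact the standard Phragm\'en--Lindel\"of proof of that cited result, and its structure is sound: the multiplier $e^{i(A+\epsilon)k}$ produces decay along the positive imaginary ray (via (iii) with $\delta=\epsilon/2$); Phragm\'en--Lindel\"of in the two quadrants (opening $\pi/2$, threshold order $2$, while $H_\epsilon$ has exponential-type growth, i.e.\ order $1$) gives boundedness of $H_\epsilon$ throughout $\overline{\mathbb{C}}_+$; the maximum principle for bounded analytic functions on the half-plane then restores the sharp constant $C$ from the real-axis bound; and the pointwise limit $\epsilon\to0^+$ removes the auxiliary factor without inflating $C$. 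One fine point, easily filled: the estimate $|H_\epsilon(i\tau)|\le e^{-(\epsilon/2)\tau}$ holds only for $\tau$ beyond some $\tau_0(\epsilon)$, so boundedness on the whole imaginary ray also invokes continuity on the compact segment from $0$ to $i\tau_0(\epsilon)$; consequently the constant entering the quadrant step is $\max\{C,M_\epsilon\}$ with an $\epsilon$-dependent $M_\epsilon$, not $C$ itself --- which is precisely why your second stage (the half-plane maximum principle) is needed, and you placed it correctly. Your diagnosis of the obstruction --- that order one is exactly the half-plane threshold, illustrated by $e^{-ik}$, so condition (iii) is indispensable --- is also the right one.
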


\begin{lemma}\emph{(See \cite{GR})}\label{l0}
For an arbitrary $0<b<\infty$ and $ p(\cdot)\in L^2[0,b]$, if $$\int_0^bp(x)\varphi(x,k)\tilde{\varphi}(x,k)dx=0$$ for all $k>0$, then $p(x)=0$ on the interval $[0,b]$, where $\varphi(x,k) $ and $\tilde{\varphi}(x,k) $ are defined by (\ref{nj}) corresponding to $q$ and $\tilde{q}$, respectively.
\end{lemma}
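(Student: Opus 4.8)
The plan is to convert the orthogonality hypothesis into a statement about a Fourier cosine transform and then extract the conclusion in two stages: first a uniqueness theorem for the cosine transform to kill an auxiliary density, and then a Volterra argument to recover $p$ itself. The whole reduction rests on the transformation operator representation (\ref{gg}).

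First I would use (\ref{gg}) to write $\varphi(x,k)=c\,S(x,k)/k$ and $\tilde\varphi(x,k)=\tilde c\,\tilde S(x,k)/k$, where $c,\tilde c$ are nonzero constants, $S(x,k)=\sin kx+\int_0^x K(x,t)\sin kt\,dt$, and $\tilde S$ is the analogous expression with the kernel $\tilde K$ attached to $\tilde q$. Since $\varphi(x,k)\tilde\varphi(x,k)=(c\tilde c/k^2)\,S(x,k)\tilde S(x,k)$ and the prefactor $c\tilde c/k^2$ is nonzero for $k>0$, the hypothesis is equivalent to $\int_0^b p(x)\,S(x,k)\tilde S(x,k)\,dx=0$ for all $k>0$. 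I would then expand $S\tilde S$ with the identity $\sin k\alpha\,\sin k\beta=\tfrac12[\cos k(\alpha-\beta)-\cos k(\alpha+\beta)]$. The diagonal term $\sin^2 kx$ produces $\tfrac12-\tfrac12\cos 2kx$, while the term linear in $K$, the one linear in $\tilde K$, and the double integral in $K\tilde K$ each produce, after changing the integration variable to the argument $s$ of the resulting cosines, an absolutely continuous density supported in $s\in[0,2x]$. This yields $S(x,k)\tilde S(x,k)=\tfrac12-\tfrac12\cos 2kx+\int_0^{2x}G(x,s)\cos ks\,ds$, with a kernel $G(x,\cdot)\in L^1[0,2x]$ built explicitly from $K$ and $\tilde K$.

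Substituting this representation and applying Fubini, the hypothesis becomes $c_0+\int_0^{2b}H(s)\cos ks\,ds=0$ for all $k>0$, where $c_0=\tfrac12\int_0^b p(x)\,dx$ and, after the substitution $s=2x$ in the $\cos 2kx$ term, $H(s)=-\tfrac14\,p(s/2)+\int_{s/2}^b p(x)G(x,s)\,dx$ for $s\in[0,2b]$. Next I would invoke the injectivity of the Fourier cosine transform: letting $k\to\infty$ and using Riemann--Lebesgue forces $c_0=0$, and the remaining identity $\int_0^{2b}H(s)\cos ks\,ds=0$ for all $k>0$, after the even extension of $H$ to $[-2b,2b]$, says that the Fourier transform of a compactly supported $L^2$ function vanishes on all of $\mathbb{R}$; hence $H\equiv0$ a.e. on $[0,2b]$.

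The final step rewrites $H=0$, after the substitution $s=2\xi$, as the homogeneous equation $p(\xi)=4\int_\xi^b p(x)\,G(x,2\xi)\,dx$ for a.e. $\xi\in[0,b]$. The part I expect to need the most care is the structural claim that the point-mass term $-\tfrac12\cos 2kx$ coming from $\sin^2 kx$ is the \emph{only} source of a pointwise (multiplication) contribution, so that the coefficient of $p(\xi)$ is genuinely isolated, while all kernel terms yield bona fide integrals over $x\in[\xi,b]$; this exhibits the equation as an identity-plus-Volterra equation in the reversed variable. Granting that $G$ is bounded (which follows from the continuity and integrability of $K$ and $\tilde K$ recorded after (\ref{k1})), the associated Volterra operator is quasinilpotent, so the identity minus it is invertible and the only solution is $p\equiv0$, which is the assertion. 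Thus the two main obstacles are: (a) verifying that the three kernel terms contribute no additional mass at the diagonal $s=2x$, so the pointwise term is exactly $-\tfrac14\,p(s/2)$; and (b) confirming that $G$ is regular enough for the Volterra inversion to apply.
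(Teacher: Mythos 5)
The paper offers no proof of this lemma at all --- it is quoted as known from Ramm's Property C paper \cite{GR} --- and your argument is a correct reconstruction of essentially that standard proof: the transformation operators turn $\varphi\tilde{\varphi}$ into a constant, plus the isolated $\cos 2kx$ term, plus a cosine transform with bounded density; injectivity of the cosine transform (after Riemann--Lebesgue kills the constant) then leaves the homogeneous Volterra equation $p(\xi)=4\int_\xi^b p(x)G(x,2\xi)\,dx$, whose only solution is $p\equiv0$ by Gronwall/quasinilpotency. Both points you flag check out: the three kernel terms contribute only absolutely continuous densities in $s$ (so $-\tfrac{1}{4}p(s/2)$ is indeed the unique pointwise term), and $G$ is bounded because the kernels $K,\tilde{K}$ are continuous for $q,\tilde{q}\in L^2$, so your proof is complete and matches the cited source's approach.
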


\begin{lemma}[See Chapter \uppercase\expandafter{\romannumeral4} of \cite{BL}]\label{3.1}
For any entire function $g(k)\not\equiv0$ of exponential type, the following inequality holds,
 \begin{equation*}
\mathop {\varliminf }\limits_{r \to \infty }  \frac{N_g(r)}{r}\leq\frac{1}{2\pi}\int_0^{2\pi}h_g(\theta)d\theta,
 \end{equation*}
where $N_g(r)$ is the number of zeros of $g(k)$ in the disk $|k|\leq r\;(r>0)$ and $h_g(\theta):=\mathop {\varlimsup }\limits_{r \to \infty }\frac{\log |g(re^{i\theta})|}{r}$ with $k=re^{i\theta}$.
\end{lemma}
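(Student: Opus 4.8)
The plan is to obtain the inequality from Jensen's formula, using the finiteness of the type of $g$ to control the growth of $\log|g|$ on large circles. First I would dispose of a possible zero of $g$ at the origin: writing $g(k)=k^{m}g_1(k)$ with $g_1(0)\ne0$ only shifts $N_g(r)$ by the constant $m$ and adds $m\log r=o(r)$ to $\log|g(re^{i\theta})|$, so it changes neither $\varliminf_{r\to\infty}N_g(r)/r$ nor the indicator $h_g$; hence one may assume $g(0)\ne0$.

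Next I would apply Jensen's formula on the disk $|k|\le r$, which for $g(0)\ne0$ reads
\begin{equation*}
\frac{1}{2\pi}\int_0^{2\pi}\log|g(re^{i\theta})|\,d\theta=\log|g(0)|+\int_0^{r}\frac{N_g(t)}{t}\,dt .
\end{equation*}
Because $g$ has finite exponential type $\sigma$, for each $\varepsilon>0$ there is $r_\varepsilon$ with $\log|g(re^{i\theta})|\le(\sigma+\varepsilon)r$ for all $\theta$ and all $r\ge r_\varepsilon$; thus $\theta\mapsto\log|g(re^{i\theta})|/r$ is dominated from above by the constant (integrable) function $\sigma+\varepsilon$. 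Dividing the identity by $r$ and passing to the upper limit, the reverse Fatou lemma --- legitimate precisely because of this one-sided domination --- gives
\begin{equation*}
\varlimsup_{r\to\infty}\frac{1}{2\pi r}\int_0^{2\pi}\log|g(re^{i\theta})|\,d\theta\le\frac{1}{2\pi}\int_0^{2\pi}\varlimsup_{r\to\infty}\frac{\log|g(re^{i\theta})|}{r}\,d\theta=\frac{1}{2\pi}\int_0^{2\pi}h_g(\theta)\,d\theta .
\end{equation*}

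It then remains to bound the averaged counting integral below by $\varliminf_{r\to\infty}N_g(r)/r$. Since $N_g$ is nondecreasing, for any $c<\varliminf_{r\to\infty}N_g(r)/r$ one has $N_g(t)\ge ct$ for all large $t$, so $\frac1r\int_0^{r}\frac{N_g(t)}{t}\,dt\ge c(1+o(1))$, and letting $c$ tend to the lower limit yields $\varliminf_{r\to\infty}N_g(r)/r\le\varliminf_{r\to\infty}\frac1r\int_0^{r}\frac{N_g(t)}{t}\,dt$. Combining this with the previous display (the term $\log|g(0)|/r$ being $o(1)$) through $\varliminf\le\varlimsup$ produces the claimed inequality.

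The step I expect to be the genuine obstacle is the interchange of the upper limit and the integral: since $h_g(\theta)$ is defined through a $\varlimsup$ and not a true limit, dominated convergence is unavailable, and one must instead invoke the reverse Fatou lemma, whose hypothesis is met only because the exponential-type estimate supplies the integrable majorant $\sigma+\varepsilon$. The remaining manipulations --- Jensen's formula and the monotonicity of $N_g$ --- are routine.
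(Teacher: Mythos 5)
Your proof is correct, but there is nothing in the paper to compare it against: the authors state this lemma as a known result, citing Chapter IV of Levin's book \cite{BL}, and supply no proof of their own. Your Jensen-formula argument is essentially the classical one behind that citation. The reduction to $g(0)\ne 0$ is handled correctly (factoring out $k^{m}$ shifts $N_g$ by a constant and adds $m\log r=o(r)$ to $\log|g|$, so neither side of the inequality changes); Jensen's formula is used in its correct form; the uniform bound $\log|g(re^{i\theta})|\le(\sigma+\varepsilon)r$ for $r\ge r_\varepsilon$, which holds precisely because $g$ has finite exponential type, is exactly what legitimizes the reverse Fatou lemma (best phrased along a sequence $r_n\to\infty$ realizing the outer $\varlimsup$, so that one takes $\varlimsup_n$ of countably many measurable functions, dominated pointwise by $h_g$); and the bound $\varliminf_{r\to\infty} N_g(r)/r\le\varliminf_{r\to\infty}\frac{1}{r}\int_0^r N_g(t)t^{-1}\,dt$ follows from the definition of $\varliminf$ alone (monotonicity of $N_g$ is not even needed). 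Two points you leave tacit are harmless: integrability of $N_g(t)/t$ near $t=0$ is guaranteed by the normalization $g(0)\ne0$, and the measurability and finiteness of $h_g$ are standard, since the indicator of a nonzero entire function of exponential type is trigonometrically convex and hence continuous (Levin, Chapter I). You correctly identified the $\varlimsup$--integral interchange as the only step requiring genuine care.
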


\section{Proofs}
\begin{proof}[Proof of Theorem \ref{t1}]
Rewrite Eqs.(\ref{3}) and (\ref{4}) as
\begin{equation}\label{k7}
 y(1,k)=\frac{\sin(ka)}{\eta(0)^{\frac{1}{4}}k}\left[1+P_1(k)\right],\quad  y'(1,k)=\frac{\cos(ka)}{\eta(0)^{\frac{1}{4}}}\left[1+P_2(k)\right],
\end{equation}
where
\begin{equation}\label{k8}
 P_1(k)=-\frac{\cot(ka)}{2k}\int_0^aq(s)ds+\frac{1}{k\sin(ka)}\int_0^aK_1(t)\cos(kt)dt,
\end{equation}
and
\begin{equation}\label{k9}
 P_2(k)=\frac{\tan(ka)}{2k}\int_0^aq(s)ds+\frac{1}{k\cos(ka)}\int_0^aK_2(t)\sin(kt)dt.
\end{equation}
By (\ref{c9}), we have
\begin{equation}\label{k11}
\begin{split}
\eta(0)^\frac{1}{4}d(k)&=\frac{\sin k}{k}\cos(ka)[1+P_2(k)]-\cos k\frac{\sin(ka)}{k}[1+P_1(k)]\\
&=\frac{\sin (k(1-a))}{2k}[2+P_2(k)+P_1(k)]+\frac{\sin(k(1+a))}{2k}[P_2(k)-P_1(k)].
\end{split}
\end{equation}
Now we shall estimate $P_2(k)-P_1(k)$ when $|k|\to\infty$ in $\mathbb{C}$.
Since $\eta\in W_2^{m+3}[0,1]$ with $\eta^{(u)}(1)=0$ for $u=\overline{1,m+1}$ and $\eta^{(m+2)}(1)\ne0$, it follows from (\ref{2}) that $q\in W_2^{m+1}[0,a]$ with $q^{(u)}(a)=0$ for $u=\overline{0,m-1}$ and $q^{(m)}(a)=\frac{\eta^{(m+2)}(1)}{4}\ne0$. Integrating by parts in (\ref{k8}) and (\ref{k9}) for $m+1$ times, and using (\ref{kk}), we have
\begin{subequations}\label{k12}
\begin{equation}
\begin{split}
\int_0^aK_1(t)\cos(kt)dt=&\sin(ka)\sum_{u=0}^s\frac{K_1^{(2u)}(a)}{(-1)^uk^{2u+1}}+\cos(ka)\sum_{v=0}^{s-1}\frac{K_1^{(2v+1)}(a)}{(-1)^vk^{2v+2}}\\
&+\frac{\varepsilon_1(k)}{k^{2s+1}},\quad \text{if\quad $m=2s$},\quad s\in\mathbb{N}_0,
\end{split}
\end{equation}
or
\begin{equation}
\begin{split}
\int_0^aK_1(t)\cos(kt)dt=&\sin(ka)\sum_{u=0}^s\frac{K_1^{(2u)}(a)}{(-1)^uk^{2u+1}}+\cos(ka)\sum_{v=0}^{s}\frac{K_1^{(2v+1)}(a)}{(-1)^vk^{2v+2}}\\
&+\frac{\varepsilon_2(k)}{k^{2s+2}},\quad \text{if\quad $m=2s+1$},\quad s\in\mathbb{N}_0,
\end{split}
\end{equation}
\end{subequations}
and
\begin{subequations}\label{k13}
\begin{equation}
\begin{split}
\!\!\!\int_0^aK_2(t)\sin(kt)dt\!=&\cos(ka)\sum_{u=0}^s\frac{K_2^{(2u)}(a)}{(-1)^{u+1}k^{2u+1}}+\sin(ka)\sum_{v=0}^{s-1}\frac{K_2^{(2v+1)}(a)}{(-1)^vk^{2v+2}}\\
&+\frac{\varepsilon_3(k)}{k^{2s+1}},\quad \text{if\quad $m=2s$},\quad s\in\mathbb{N}_0,
\end{split}
\end{equation}
or
\begin{equation}
\begin{split}
\!\!\!\int_0^aK_2(t)\sin(kt)dt\!=&\cos(ka)\sum_{u=0}^s\frac{K_2^{(2u)}(a)}{(-1)^{u+1}k^{2u+1}}\!+\!\sin(ka)\sum_{v=0}^{s}\frac{K_2^{(2v+1)}(a)}{(-1)^vk^{2v+2}}\\
&+\frac{\varepsilon_4(k)}{k^{2s+2}},\quad \text{if\quad $m=2s+1$},\quad s\in\mathbb{N}_0,
\end{split}
\end{equation}
\end{subequations}
where $\varepsilon_j(k)\;(j=\overline{1,4}) $ have the form of $\int_0^aK_0(t)\sin (kt)dt$ or $\int_0^aK_0(t)\cos (kt)dt$ with some $K_0(\cdot)\in L^2(0,a)$. We only discuss the case $m=2s$, and the case $m=2s+1$ is similar. Note that $\varepsilon_j(k)=o(e^{|{\rm Im}k|a}) $ as $|k| \to\infty $ in $\mathbb{C}$ (see \cite[p.15]{PT}). Substituting (\ref{k12}) and (\ref{k13}) into (\ref{k8}) and (\ref{k9}), respectively, and subtracting, we obtain
\begin{equation}\label{k14}
 \begin{split}
 P_2(k)-P_1(k)=&\frac{\int_0^aq(s)ds}{2k}[\tan(ka)+\cot(ka)]+\sum_{u=0}^s\frac{K_2^{2u}(a)+K_1^{2u}(a)}{(-1)^{u+1}k^{2u+2}}\\
 &+\tan(ka)\sum_{v=0}^{s-1}\frac{K_2^{(2v+1)}(a)}{(-1)^{v}k^{2v+3}}-\cot(ka)\sum_{v=0}^{s-1}\frac{K_1^{(2v+1)}(a)}{(-1)^{v}k^{2v+3}}\\
 &+\frac{\varepsilon_5(k)}{k^{2s+2}},\;\;\varepsilon_5(k)=o(1),\quad |k|\to\infty,\quad k\in \mathbb{C}_\pm,
 \end{split}
\end{equation}
where $\mathbb{C}_\pm:=\{k\in \mathbb{C}:\pm{\rm Im}k>0\}$. Note that for $|k|\to\infty$ in $\mathbb{C}_\pm$,
\begin{equation}\label{k15}
 \tan(ka)=\pm i+O(e^{-2a|{\rm Im}k|}), \quad \cot(ka)=\mp i+O(e^{-2a|{\rm Im}k|}).
\end{equation}
Substituting (\ref{k15}) into (\ref{k14}), and observing that $\tan(ka)+\cot(ka)=2/\sin(2ka)$, we get
\begin{equation}\label{k16}
 \begin{split}
 P_2(k)-P_1(k)=&\frac{\int_0^aq(s)ds}{k\sin(2ak)}+\sum_{u=0}^s\frac{K_2^{2u}(a)+K_1^{2u}(a)}{(-1)^{u+1}k^{2u+2}}\\
 &\pm i\sum_{v=0}^{s-1}\frac{K_2^{(2v+1)}(a)+K_1^{(2v+1)}(a)}{(-1)^{v}k^{2v+3}}+O\left(\frac{e^{-2a|{\rm Im}k|}}{k^3}\right)\\
 &+\frac{\varepsilon_5(k)}{k^{2s+2}},\quad |k|\to\infty,\quad k\in \mathbb{C}_\pm.
 \end{split}
\end{equation}
Now we shall calculate $K_1^{(u)}(a)+K_2^{(u)}(a)$ for $u=\overline{0,m}$.
Using (\ref{k2}) and (\ref{k3}), we have
\begin{equation*}
K(t):=K_1(t)+K_2(t)=\frac{1}{2}q\left(\frac{a+t}{2}\right)+\int_{\frac{a+t}{2}}^aq(\tau)K(\tau,a+t-\tau)d\tau.
\end{equation*}
Since $q^{(u)}(a)=0$ for $u=\overline{0,m-1}$ and $q^{(m)}(a)=\frac{\eta^{(m+2)}(1)}{4}\ne0$, we obtain
\begin{equation}\label{k17}
K^{(u)}(a)=0,\; u=\overline{0,m-1},\; K^{(m)}(a)=\frac{q^{(m)}(a)}{2^{m+1}}=\frac{\eta^{(m+2)}(1)}{2^{m+3}}.
\end{equation}
Substituting (\ref{k17}) into (\ref{k16}), we get, for the case $m=2s$,
\begin{subequations}\label{k18}
\begin{equation}
 \begin{split}
 P_2(k)-P_1(k)=&\frac{\int_0^aq(s)ds}{k\sin(2ak)}+\frac{(-1)^{\frac{m}{2}+1}\eta^{(m+2)}(1)}{2^{m+3}k^{m+2}}\\
 &+O\left(\frac{e^{-2a|{\rm Im}k|}}{k^3}\right)+\frac{\varepsilon_5(k)}{k^{m+2}},\quad |k|\to\infty,\quad k\in \mathbb{C}_\pm.
 \end{split}
\end{equation}
Similarly, one can get that for the case $m=2s+1$,
\begin{equation}
 \begin{split}
 P_2(k)-P_1(k)=&\frac{\int_0^aq(s)ds}{k\sin(2ak)}\pm i\frac{(-1)^{\frac{m-1}{2}}\eta^{(m+2)}(1)}{2^{m+3}k^{m+2}}\\
 &+O\left(\frac{e^{-2a|{\rm Im}k|}}{k^3}\right)+\frac{\varepsilon_5(k)}{k^{m+2}},\quad |k|\to\infty,\quad k\in \mathbb{C}_\pm.
 \end{split}
\end{equation}
\end{subequations}

Let $k:=\sigma+i\tau$, and consider the domain $$\mathbb{C}_\pm^\epsilon:=\left\{k\in \mathbb{C}_\pm:|\tau|\ge\frac{m+2-\epsilon}{2a}\log |\sigma|,0<\epsilon<1\right\}\quad \text{if}\quad a\ne1.$$
Substituting (\ref{k18}) into (\ref{k11}),
we have that if $a\ne1$ and $|k|\to\infty$ in $\mathbb{C}_\pm^\epsilon$, then, for the case $m=2s$,
\begin{subequations}\label{k19}
\begin{equation}\label{25a}
\eta(0)^\frac{1}{4}d(k)=\frac{\sin (k(1-a))}{k}\left[1+O\left(\frac{1}{k}\right)\right]+\frac{\eta^{(m+2)}(1)\sin(k(1+a))}{(-1)^{\frac{m}{2}+1}2(2k)^{m+3}}[1+\varepsilon_6(k)],
\end{equation}
and
for the case $m=2s+1$,
\begin{equation}
\eta(0)^\frac{1}{4}d(k)=\frac{\sin (k(1-a))}{k}\left[1+O\left(\frac{1}{k}\right)\right]\pm i\frac{\eta^{(m+2)}(1)\sin(k(1+a))}{(-1)^{\frac{m-1}{2}}2(2k)^{m+3}}[1+\varepsilon_6(k)],
\end{equation}
\end{subequations}
if $a=1$, $\int_0^1q(s)ds\ne0$ and $|k|\to\infty$ in $\mathbb{C}_\pm$, then for the case $m=2s$,
\begin{subequations}
\begin{equation}\label{36a}
\eta(0)^\frac{1}{4}d(k)=\frac{\int_0^1q(s)ds}{2k^2}\left[1+O\left(\frac{1}{k^2}\right)\right]+\frac{\eta^{(m+2)}(1)\sin(2k)}{(-1)^{\frac{m}{2}+1}2(2k)^{m+3}}[1+\varepsilon_7(k)],
\end{equation}
and
for the case $m=2s+1$,
\begin{equation}
\eta(0)^\frac{1}{4}d(k)=\frac{\int_0^1q(s)ds}{2k^2}\left[1+O\left(\frac{1}{k^2}\right)\right]\pm i\frac{\eta^{(m+2)}(1)\sin(2k)}{(-1)^{\frac{m-1}{2}}2(2k)^{m+3}}[1+\varepsilon_7(k)],
\end{equation}
\end{subequations}
where
\begin{equation}\label{sbw1}
 \varepsilon_6(k)=c|k|^{m+1}e^{-2a|{\rm Im}k|}+\varepsilon_5(k)=o(1),\quad |k|\to\infty,\;\;k\in \mathbb{C}_\pm^\epsilon,
\end{equation}
and
\begin{equation}\label{sbw2}
\varepsilon_7(k)=c |k|^{m-1}e^{-2|{\rm Im}k|}+\varepsilon_5(k)=o(1),\quad |k|\to\infty,\;\;k\in \mathbb{C}_\pm.
\end{equation}

The remaining proof should be divided into six subcases: (i) $a>1$ and $m=2s$; (ii) $a>1$ and $m=2s+1$; (iii) $a<1$ and $m=2s$; (iv) $a<1$ and $m=2s+1$; (v) $a=1$ and $m=2s$; (vi) $a=1$ and $m=2s+1$. We only discuss the subcases (i) and (v) in details, and the other cases are similar and omitted.

 Case (i): by virtue of (\ref{25a}), we know that $d(k)=0$ for $|k|\to\infty$ in $\mathbb{C}_\pm^\epsilon$ is equivalent to that
\begin{equation*}
2^{m+4}k^{m+2}\sin(k(1-a))\left[1+O\left(\frac{1}{k}\right)\right]=(-1)^{\frac{m}{2}}\eta^{(m+2)}(1)\sin(k(1+a))[1+\varepsilon_6(k)].
\end{equation*}
Setting $k=\frac{z}{i}$, we have $(-1)^{\frac{m}{2}}(\frac{1}{i})^{m+2}=(-1)^{\frac{m}{2}}(-1)^{\frac{m}{2}+1}=-1$, and furthermore,
\begin{equation*}
  \frac{2^{m+4}z^{m+2}}{\eta^{(m+2)}(1)}[e^{z(a-1)}-e^{z(1-a)}]=[e^{z(1+a)}-e^{-z(1+a)}][1+\varepsilon_6(k)],\;|z|\to\infty,\;k\in \mathbb{C}_\pm^\epsilon,
\end{equation*}
Taking logarithm on both sides of the above equation, we get that for sufficiently large $n\in \mathbb{Z}$,
\begin{equation*}
\left\{\begin{split}
z-\frac{m+2}{2}\log z=w_n,\quad w_n:=-n\pi i+\frac{1}{2}\log\left(\frac{2^{m+4}}{\eta^{(m+2)}(1)}\right)+\varepsilon_8(k),\;{\rm Re}z>0,\\
z+\frac{m+2}{2}\log z=w_n,\quad w_n:=n\pi i-\frac{1}{2}\log\left(\frac{2^{m+4}}{\eta^{(m+2)}(1)}\right)+\varepsilon_8(k),\;{\rm Re}z<0,
\end{split}\right.
\end{equation*}
where
\begin{equation}\label{xunm1}
\begin{split}
\varepsilon_8(k)&=\pm\log(1+\varepsilon_6(k))\pm\log(1+e^{2|{\rm Re}z|(1-a)})\pm\log(1+e^{-2|{\rm Re}z|(1+a)})\\
&=o(1),\quad|k|\to\infty,\quad k\in \mathbb{C}_\pm^\epsilon.
\end{split}
\end{equation}
It follows from (\ref{k5}) and (\ref{k6}) and $z=ik$ that
\begin{equation}\label{xul1}
   k_n^\pm=\pm n\pi \pm \frac{i}{2}\log\left(\frac{4(2n\pi i)^{m+2}}{\eta^{(m+2)}(1)}\right)+\alpha_n^\pm,\quad \alpha_n^\pm=o(1),\quad n\to\infty.
\end{equation}
Clearly, the above sequences belong to the domain $\mathbb{C}_\pm^\epsilon$ for all large $|n|$.

Substituting (\ref{xul1}) into (\ref{k12}) and (\ref{k13}), we get that $\varepsilon_j(k_n^\pm)e^{-a|{\rm Im }k_n^\pm|}\in l^2$ for $j=\overline{1,4}$, which implies $\varepsilon_5(k_n^\pm)\in l^2$. It follows from (\ref{sbw1}) and (\ref{sbw2}) that $\varepsilon_8(k_n^\pm)\in l^2$. Taking (\ref{k5}) and (\ref{k6}) into account, we can obtain $\alpha_n^\pm\in l^2$.

 Case (v): by virtue of (\ref{36a}),
we know that $d(k)=0$ for $|k|\to\infty$ in $\mathbb{C}_\pm$ is equivalent to that
\begin{equation*}
  \frac{\int_0^1q(s)ds}{\eta^{(m+2)}(1)}2^{m+3}k^{m+1}(-1)^\frac{m}{2}=\sin(2k)[1+\varepsilon_7'(k)],\quad |k|\to\infty,\quad k\in\mathbb{C}_\pm,
\end{equation*}
where $\varepsilon_7'(k)=\varepsilon_7(k)+O(k^{-2})$.
Setting $k=\frac{z}{i}$, we have $(-1)^{\frac{m}{2}}(\frac{1}{i})^{m+1}=(-1)^{\frac{m}{2}}(-1)^{\frac{m}{2}}\frac{1}{i}=\frac{1}{i}$, and
\begin{equation*}
  \frac{\int_0^1q(s)ds}{\eta^{(m+2)}(1)}2^{m+4}z^{m+1}=[e^{2z}-e^{-2z}][1+\varepsilon_7'(k)],\quad |z|\to\infty,\quad k\in\mathbb{C}_\pm,
\end{equation*}
which implies that for sufficiently large $n\in \mathbb{Z}$
\begin{equation*}
  \left\{ \begin{split}
 &z-\frac{m+1}{2}\log z=-n\pi i+\frac{1}{2}\log \frac{\int_0^1q(s)ds}{\eta^{(m+2)}(1)}2^{m+4} +\varepsilon_8'(k),\quad {\rm Re }z>0,\\
   & z+\frac{m+1}{2}\log z=n\pi i-\frac{1}{2}\log \frac{-\int_0^1q(s)ds}{\eta^{(m+2)}(1)}2^{m+4} +\varepsilon_8'(k),\quad {\rm Re }z<0.
  \end{split}\right.
\end{equation*}
It follows from (\ref{k5}) and (\ref{k6}) and $z=ik$ that for $n\in \mathbb{Z}$ and $|n|\to\infty$
\begin{equation*}
  \left\{ \begin{split}
 &k_n^-=-n\pi -\frac{i}{2}\log \left(\frac{\int_0^1q(s)ds}{\eta^{(m+2)}(1)}2^{m+4}(-n\pi i)^{m+1}\right) +\gamma_n^-,\\
&k_n^+=n\pi +\frac{i}{2}\log \left(\frac{-\int_0^1q(s)ds}{\eta^{(m+2)}(1)}2^{m+4}(n\pi i)^{m+1}\right) +\gamma_n^+.
  \end{split}\right.
\end{equation*}
Using a similar argument, one gets $\gamma_n^\pm\in l^2$.

Through similar arguments, one obtains asymptotics of other cases. The proof is finished.
\end{proof}

\begin{proof}[Proof of Theorem \ref{t3}]
Since the function $d(k)$ is an entire function of $k$ of order $1$ and even with respect to $k$, by Hadamard's  factorization theorem,
\begin{equation}\label{uy}
  d(k)=\gamma E(k),\quad E(k):=k^{2s}\prod_{k_n\ne0}\left(1-\frac{k^2}{k_n^2}\right),
\end{equation}
where $s$ is the multiplicity of the zero eigenvalue.

Using (\ref{i1}), (\ref{k0}) and (\ref{2}),  one can verify that specification of $\eta(r)$ on
$[\varepsilon,1]$ with $\varepsilon$ satisfying (\ref{t31}) is equivalent to
specification of $q(x)$ for $x\in[\frac{a+1}{2},a]$. Let us prove that $q(x)$ on $[0,a]$ is uniquely determined by $E(k)$ and the known $q(x)$ on $[\frac{a+1}{2},a]$. If it is true, then $\eta(r)$ on $[0,1]$ with $\eta(1)=1$ and $\eta'(1)=0$ is uniquely determined by $E(k)$ and the known $\eta(r)$ on $[\varepsilon,1]$. (See \cite{JR}).

 Suppose that there are two functions $q$ and $\tilde{q}$ corresponding to the same $E(k)$ defined by (\ref{uy}). Let ($a,\varphi$) and ($\tilde{a},\tilde{\varphi}$) be their corresponding quantities in (\ref{i1}) and (\ref{nj}). By virtue of (\ref{zx}) and $a>1$, we obtain
 \begin{equation*}
   a=\tilde{a}.
 \end{equation*}

 Denote
 \begin{equation}\label{msz}
   g(k):=\int_0^{\frac{a+1}{2}}[\tilde{q}(x)-q(x)]\varphi(x,k)\tilde{\varphi}(x,k)dx.
 \end{equation}
 It follows from (\ref{gg}) that
\begin{equation}\label{g5}
 |g(k)|\leq M_0\frac{e^{(1+a)|{\rm Im} k|}}{|k|^2}\quad \text{for some }\quad M_0>0.
\end{equation}
Since $q(x)=\tilde{q}(x)$ on $[\frac{a+1}{2},a]$, together with (\ref{nj}), we get
 \begin{equation}\label{eq1}
g(k)\!=\!\int_0^{a}\![\tilde{q}(x)-q(x)]\varphi(x,k)\tilde{\varphi}(x,k)dx=\tilde{\varphi}'(a,k)\varphi(a,k)-\tilde{\varphi}(a,k)\varphi'(a,k).
 \end{equation}
 Note that Eq.(\ref{k0}) with $\eta(1)=1$ and $\eta'(1)=0$ implies that
\begin{equation}\label{bi}
 \varphi(a,k)=y(1,k)\quad \text{and} \quad \varphi'(a,k)=y'(1,k).
\end{equation}
 It yields from (\ref{c9}) that
 \begin{equation*}
   d(k)=\frac{\sin k}{k}\varphi'(a,k)-\varphi(a,k)\cos k=\frac{\sin k}{k}[\varphi'(a,k)-\varphi(a,k)k\cot k],
 \end{equation*}
 which implies
 \begin{equation}\label{sdf}
 \varphi'(a,k)=\frac{k}{\sin k}d(k)+\varphi(a,k)k\cot k.
 \end{equation}
Together with (\ref{sdf}) it follows from (\ref{eq1}) that
 \begin{equation*}\label{eq2}
   \begin{split}
   g(k)
   &=\frac{k}{\sin k}[\varphi(a,k)\tilde{d}(k)-\tilde{\varphi}(a,k){d}(k)]\\
   &=\frac{k E(k)}{\sin k}\gamma\tilde{\gamma}\left[\frac{\varphi(a,k)}{\gamma}-\frac{\tilde{\varphi}(a,k)}{\tilde{\gamma}}\right].
   \end{split}
 \end{equation*}
 Set
 \begin{equation}\label{eq3}
   G(k):=\frac{g(k)}{E(k)}=\frac{k}{\sin k}\gamma\tilde{\gamma}\left[\frac{\varphi(a,k)}{\gamma}-\frac{\tilde{\varphi}(a,k)}{\tilde{\gamma}}\right].
 \end{equation}
 Observing that $d(k)/\gamma=\tilde{d}(k)/\tilde{\gamma}$, one has
 \begin{equation*}
   \frac{1}{\gamma}\left[\frac{\sin k}{k}\varphi'(a,k)-\varphi(a,k)\cos k\right]=\frac{1}{\tilde{\gamma}}\left[\frac{\sin k}{k}\tilde{\varphi}'(a,k)-\tilde{\varphi}(a,k)\cos k\right],
 \end{equation*}
 which implies
 \begin{equation*}
\frac{\varphi(a,n\pi)}{\gamma}-\frac{\tilde{\varphi}(a,n\pi)}{\tilde{\gamma}}=0,\quad n=\pm1,\pm2,\cdot\cdot\cdot,
 \end{equation*}
 and so $G(k)$ is an entire function of $k$ from (\ref{eq3}).

 Due to (\ref{g5}), we know that $G(k)$ satisfies the condition (i) in Lemma \ref{l3}.
From (\ref{k19}) and (\ref{uy}) it follows that
\begin{equation}\label{eqq}
 E(\pm i\tau)=\frac{ce^{(a+1)\tau}}{\tau^{m+3}}[1+o(1)],\quad c\ne0,\quad \tau\to+\infty,
\end{equation}
which implies from (\ref{g5}) and (\ref{eq3}) that
\begin{equation*}
 |G(i\tau)|\le C \tau^{m+1},\quad \tau\to+\infty,
\end{equation*}
where $m\ge0$ appears in Theorem \ref{t1}.
It yields  $\mathop {\varlimsup}\limits_{\tau\to+\infty}{\log|G({\rm i}\tau)|}/{\tau}:=A\le0$. If we can prove $|G(k)|\le C$ for $k\in \mathbb{R}$ (see $(\ast)$ below), then it follows from Lemma \ref{l3} that for all $k\in \overline{\mathbb{C}}_+$
\begin{equation}\label{rq}
 |G(k)|\le C.
\end{equation}
Note that $G(k)$ is even, so Eq.(\ref{rq}) holds on the whole complex plane. This implies that $G(k)$ is a constant from Liouville's theorem. In addition, for the sequence $\{n\pi\}_{n\ge1}$ there holds $G(n\pi)\to0$ as $n\to\infty$ (see $(\ast)$ below). It follows that   $G(k)\equiv0$, which implies $g(k)\equiv0$, and so $q(x)=\tilde{q}(x)$ for $x\in [0,a]$ by Lemma \ref{l0}.

Now, we shall prove $(\ast)$: $G(k)$ is bounded on $\mathbb{R}$ and $G(n\pi)$ tends to zero as $n\to\infty$.
Using (\ref{k8}), (\ref{k9}), (\ref{k11}) and (\ref{uy}), we get
\begin{equation*}
  E(k)=\frac{\sin(k(1-a))}{k\gamma\eta(0)^{1/4}}\left[1+O\left(\frac{1}{k}\right)\right],\quad |k|\to\infty,\quad k\in \mathbb{R},
\end{equation*}
which implies $\gamma\eta(0)^{1/4}$ is uniquely determine by $E(k)$ if $a\ne1$. Substituting (\ref{gg}) into (\ref{eq3}), we have
\begin{equation}\label{yr}
G(k)=\frac{\tilde{\gamma }}{\eta(0)^{1/4}\sin k}\int_0^a\left(K(a,t)-\tilde{K}(a,t)\right)\sin(kt)dt.
\end{equation}
Note that $G(k)$ is an entire function of $k$ from the above argument, thus, zeros of $\sin k$ can not be poles of $G(k)$. Thus, it follows from (\ref{yr}) that
 \begin{equation*}
 \int_0^a\left(K(a,t)-\tilde{K}(a,t)\right)\sin(n\pi t)dt=0,\quad n=0,\pm1,\pm2\cdot\cdot\cdot.
 \end{equation*}
Letting $k\to n\pi$ in (\ref{yr}), we get from the L'Hospital principle that
 \begin{equation}\label{kji}
   G(n\pi)=\frac{\tilde{\gamma }\int_0^a\left(\tilde{K}(a,t)-{K}(a,t)\right)t\cos(n\pi t)dt}{\eta(0)^{1/4}(-1)^n},\quad n=0,\pm1,\pm2\cdot\cdot\cdot.
 \end{equation}
Thus, $G(k)$ is bounded on $\mathbb{R}$ and $G(n\pi)$ tends to zero as $n\to\infty$ from (\ref{kji}). Therefore, we have finished the proof.
\end{proof}
\begin{proof}[Proof of Theorem \ref{t4}]
By a similar argument to the proof of Theorem \ref{t3}, we know that it is enough to show the function $g(k)\equiv0$, where $g(k)$ is defined in (\ref{msz}) with $(a+1)/2$ replacing by $a-b$ (because now $q(x)=\tilde{q}(x)$ on $[a-b,a]$ from (\ref{t40})). From (\ref{eq1})
and (\ref{bi}), together with the boundary condition in (\ref{1}), we get
\begin{equation}\label{gg3}
g(k)=0 \quad\text{for}\quad k\in D\cup\{k_n'\}_{n\ge n_0} .
\end{equation}

Since $|{\rm Im}k|=r|\sin\theta|$, where $k=re^{{\rm i}\theta}$, it follows from (\ref{g5}) with $a+1$ replacing by $2(a-b)$ that
\begin{equation*}
h_g(\theta):=\mathop {\varlimsup }\limits_{r \to \infty }\frac{\log |g(re^{{\rm i}\theta})|}{r}\leq2(a-b)|\sin\theta|,
\end{equation*}
which implies
\begin{equation}\label{z3}
 \frac{1}{2\pi}\int_0^{2\pi}h_g(\theta)d\theta\leq\frac{2(a-b)}{\pi}\int_0^{2\pi}|\sin\theta|d\theta=\frac{4(a-b)}{\pi}.
\end{equation}
On the other hand, from (\ref{gg3}) and (\ref{zx}) we have
\begin{equation*}
  N_g(r)\ge N_D(r) + \frac{2(a-1)r}{\pi}[1+o(1)]=\frac{2(\alpha+a-1) r}{\pi}[1+o(1)],\;r\to\infty.
\end{equation*}
It follows from Lemma \ref{3.1} and (\ref{z3}) that if the entire function $g(k)\not\equiv0$ then
\begin{equation*}
  \frac{2(\alpha+a-1)}{\pi}\le \mathop {\varliminf }\limits_{r \to \infty }  \frac{N_g(r)}{r}\leq\frac{1}{2\pi}\int_0^{2\pi}h_g(\theta)d\theta\le\frac{4(a-b)}{\pi},
\end{equation*}
which yields $\alpha\le a+1-2b$.
However, now $\alpha> a+1-2b$, it yields $g(k)\equiv0$. The proof is complete.
\end{proof}

\noindent\textbf{Appendix}
\vspace*{1.5mm}

Let us give the proof for Proposition \ref{p2.1}. Consider the equation for $\xi$
\begin{equation}\label{ap3}
  \xi-\lambda\left(\frac{\log w}{w}+\frac{\log(1+\xi)}{w}\right)=0.
\end{equation}
where $w$ is fixed with sufficiently large modulus such that $\left|\lambda\frac{\log w}{w}\right|:=\delta<1/4$.
Denote $$f(\xi):=\xi,\quad h(\xi):=-\lambda\left(\frac{\log w}{w}+\frac{\log(1+\xi)}{w}\right).$$
Consider the contour $\Gamma_\delta:=\{\xi\in \mathbb{C}: |\xi|=3\delta\}$ and the disk $D_\delta:=\{\xi\in \mathbb{C}: |\xi|\le 3\delta\}$. Since $\log (1+\xi)$ is bounded for $\xi\in D_\delta$, we can choose $w$ (only need to be sufficiently large) such that
\begin{equation*}
  |h(\xi)|\le |\lambda|\left(\left|\frac{\log w}{w}\right|+\left|\frac{\log(1+\xi)}{w}\right|\right)\le 2\delta,\quad \xi\in D_\delta.
\end{equation*}
It follows that when $\xi\in \Gamma_\delta$, $|f(\xi)|=3\delta>2\delta\ge|h(\xi)|$. Using the Rouch\'{e} theorem, we conclude that $f(\xi)+h(\xi)$ has a unique  (simple) zero inside $\Gamma_\delta$. Therefore, the equation (\ref{ap3}) has a unique solution $\xi=\xi(w)$ for any sufficiently large $|w|$.

For the equation (\ref{k5}), by changing the variable $z=w(1+\xi)$, we can transform it into (\ref{ap3}). Conversely, from (\ref{ap3}), by letting $\xi=z/w-1$, we can get (\ref{k5}). Hence the equation (\ref{k5}) is equivalent to (\ref{ap3}). So Eq.(\ref{k5})  has a unique solution for any sufficiently large $|w|$.

Next, let us prove (\ref{k6}). Using (\ref{ap3}) again, we have
\begin{equation*}
 |\xi|\le C_1\frac{\log |w|}{|w|},\quad \left|\xi-\lambda\frac{\log w}{w}\right|\le C_2  \frac{\log (1+|\xi|) }{|w|}\le  {C_3}\frac{\log |w|}{|w|^2}
\end{equation*}
for sufficiently large $|w|$, where $C_j>0$ ($j=\overline{1,3}$) are  constants. It follows from (\ref{ap3}) and $z=w(1+\xi)$ that
\begin{equation*}
  z=w+\lambda\log w+O\left(\frac{\log |w|}{|w|}\right).
\end{equation*}

\vspace*{2.5mm}

\noindent {\bf Acknowledgments.}
The authors would like to thank the referees for valuable suggestions and comments.
The author Xu was supported in part by the Startup Foundation for Introducing Talent of NUIST. The author Yang was supported in part by the National Natural Science Foundation of
China (11611530682 and 11871031).
The author Buterin was supported in part by by RFBR (Grants 15-01-04864). The authors Buterin and Yurko were supported by the Ministry of Education and
Science of RF (Grant 1.1660.2017/PCh) and by RFBR (16-01-00015 and 17-51-53180).

\end{document}